\newtheorem{lemma}{Lemma}[section]
\newtheorem{cor}[lemma]{Corollary}
\newtheorem{claim*}{Claim}
\newtheorem{thm}[lemma]{Theorem}
\newtheorem*{thm*}{Theorem}
\newtheorem{defn}[lemma]{Definition}
\theoremstyle{remark}
\newtheorem{remark}[lemma]{Remark}
\newtheorem{example}[lemma]{Example}
\newcommand{\kk}{\mathbf k}
\newcommand{\FF}{\mathbb{F}}
\newcommand{\QQ}{\mathbb Q}
\newcommand{\ZZ}{\mathbb Z}
\author{John D. Berman}
\author{Daniel Erman}
\thanks{The first author was supported by an NSF Postdoctoral Fellowship under grant 1803089. The second author received support from NSF grant DMS-1601619.}
\title{Interpolation over $\ZZ$ and torsion in class groups}
\date{\today}
\begin{document} 

\begin{abstract}
We prove an interpolation result for homogeneous polynomials over the integers, or more generally for PIDs with finite residue fields. Previous proofs of this result use the well-known but nontrivial fact that class groups of rings of integers are torsion. In this short note, we provide an independent proof using elementary techniques.
\end{abstract}
\maketitle

Over a field, one can use homogeneous polynomials of bounded degree to solve any interpolation problem.  
Namely, given any points $P_1, \dots, P_s$ in $\kk^n$ and any specified values $a_1, \dots, a_s\in \kk$, one can always find a homogeneous polynomial $f\in \kk[x_1, \dots,x_n]$ of degree at most $s-1$ where $f(P_i)=a_i$ for all $i$; see, for instance, in~\cite[Theorem~4.2]{eis-syzygy}.

Interpolation questions become more complicated when one passes from a field to the integers or other commutative rings.  Given $P_1, \dots, P_s\in R^n$ and $a_1, \dots, a_s\in R$, one can ask whether there exists a homogeneous polynomial $f\in R[x_1, \dots, x_n]$ where $f(P_i)=a_i$ for all $i$.  Even over the integers, these problems can be quite subtle: not every such interpolation problem is solvable, and  even when a solution exists, one might need a polynomial of surprisingly high degree (see Examples~\ref{ex:deg 60} and \ref{ex:no solution}).

Our main result on interpolation over $\ZZ$ is that if $a_i=1$ for all $i$, and the coordinates of the $P_i$ are relatively prime, then one can always solve the interpolation problem.

\begin{thm}\label{thm:interpolation}
Suppose $S\subseteq \ZZ^n$ is a finite set where, for each $Q\in S$,  the coordinates $\langle q_1,\dots, q_n\rangle $ of $Q$ satisfy $\text{gcd}(q_1,\dots,q_n)=1$.  Then there exists a (nonconstant) homogeneous polynomial $f\in\ZZ[x_1,\dots,x_n]$ such that $f(Q)=1$ for each $Q\in S$.
\end{thm}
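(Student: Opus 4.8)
The plan is to fix a degree $d$ and study the evaluation homomorphism $\operatorname{ev}_d\colon \ZZ[x_1,\dots,x_n]_d \to \ZZ^{S}$ sending a form $f$ to the tuple $(f(Q))_{Q\in S}$; the theorem asserts that the all-ones vector $\mathbf 1$ lies in the image $L_d:=\operatorname{ev}_d(\ZZ[x_1,\dots,x_n]_d)$ for some $d\ge 1$. First I would dispose of degenerate configurations: if $Q=\pm Q'$ for two points of $S$, then, since both are primitive, they impose compatible conditions as soon as $d$ is even, so after deleting redundancies I may assume the points of $S$ are pairwise distinct in $\PP^{n-1}(\QQ)$. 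For such points the field interpolation statement recalled in the introduction gives surjectivity of $\operatorname{ev}_d\otimes\QQ$ once $d\ge |S|-1$, so $L_d$ has finite index in $\ZZ^{S}$ and the cokernel $C_d:=\ZZ^{S}/L_d$ is a finite abelian group. Because $\mathbf 1\in L_d$ if and only if $[\mathbf 1]=0$ in $C_d$, and an element of a finite abelian group vanishes exactly when it vanishes $p$-locally for every prime, the problem reduces to finding a single well-chosen $d$ with $\mathbf 1\in L_d\otimes\ZZ_p$ for every prime $p$.

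The next step is to isolate the finitely many primes that can matter. For all but finitely many $p$ the reductions of the points of $S$ remain pairwise distinct in $\PP^{n-1}(\FF_p)$; the exceptional $p$ are those dividing a fixed product of $2\times 2$ (resultant-type) minors built from the coordinates of the points, which I would write down explicitly. For such a good prime, interpolation over the field $\FF_p$ produces a form of degree $\le|S|-1$ taking the value $1$ at every point, so $\mathbf 1 \bmod p\in L_d\otimes\FF_p$; moreover distinctness makes $\operatorname{ev}_d\otimes\FF_p$ surjective, and a routine successive-approximation (Hensel) argument then upgrades this to $\mathbf 1\in L_d\otimes\ZZ_p$ for every $d\ge|S|-1$. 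Thus good primes impose no constraint beyond $d\ge|S|-1$.

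The heart of the matter, and the step I expect to be hardest, is the local analysis at a bad prime $p$, where several points of $S$ collapse to one point of $\PP^{n-1}(\FF_p)$; here the class-group-torsion input of the classical proof must be replaced by something hands-on. If $Q\equiv\lambda Q'\pmod p$ with $\lambda\in\FF_p^{\times}$, then every homogeneous $f$ of degree $d$ satisfies $f(Q)\equiv\lambda^{d}f(Q')\pmod p$, so the value $1$ is attainable at both points modulo $p$ only when $\lambda^{d}\equiv 1$. The key elementary observation is that this obstruction is \emph{cyclic}: it is killed simply by requiring $d$ to be divisible by the order of $\lambda$, which divides $p-1$. I would organize the colliding points into clusters, record for each cluster the multiplicative scalars relating its members mod $p$, and choose $d$ divisible by the least common multiple of their orders. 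With consistency mod $p$ arranged, the remaining task is to lift an approximate solution to an exact one over $\ZZ_p$; I expect to show that for $d$ in a suitable arithmetic progression the cokernel of $\operatorname{ev}_d\otimes\ZZ_p$ is annihilated by $p$ (equivalently $L_d\otimes\ZZ_p\supseteq p\,\ZZ_p^{S}$), which is exactly the condition letting a mod-$p$ solution be promoted to an exact one. Verifying this integral statement uniformly over all clusters, including points that agree to higher $p$-adic precision, is the delicate part of the argument.

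Finally I would assemble the degree: choose $d$ to be a common multiple, at least $|S|-1$, of the finitely many orders and precision bounds coming from the finitely many bad primes. For this single $d$ the two previous steps give $\mathbf 1\in L_d\otimes\ZZ_p$ at every prime, good and bad alike, while at the remaining primes the containment is automatic since they do not divide the finite index $[\ZZ^{S}:L_d]$. By the local-global principle for the finite group $C_d$ this forces $[\mathbf 1]=0$, so there is a genuine form $f\in\ZZ[x_1,\dots,x_n]_d$ with $f(Q)=1$ for all $Q\in S$; since $d\ge 1$ it is nonconstant, completing the proof.
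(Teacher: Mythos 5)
Your overall architecture --- the evaluation lattice $L_d\subseteq\ZZ^S$, finiteness of the cokernel for $d\ge|S|-1$, the local--global reduction, and the Nakayama/Hensel argument at good primes --- is sound, and it runs parallel to the paper's own reduction (the paper also constructs separating forms and reduces to solving the problem modulo a single integer $a$). But the step you yourself flag as delicate rests on a claim that is false as stated: it is not true that, for $d$ in an arithmetic progression determined by the orders of the scalars $\lambda\in\FF_p^\times$, the cokernel of $\operatorname{ev}_d\otimes\ZZ_p$ is annihilated by $p$. Take $S=\{\langle 1,0\rangle,\langle 1,p^2\rangle\}$, both primitive. Every homogeneous $f$ of every degree satisfies $f(1,p^2)\equiv f(1,0)\pmod{p^2}$, so $L_d\otimes\ZZ_p\subseteq\{(a,b):a\equiv b\pmod{p^2}\}$ for \emph{all} $d$; hence $p\cdot e_2=(0,p)$ never lies in $L_d\otimes\ZZ_p$, and the class of $e_2$ has order $p^2$ in the cokernel. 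No choice of arithmetic progression repairs this, since the containment is degree-independent. (Here $\mathbf 1$ is still in the image --- take $f=x^d$ --- but your sufficient criterion ``mod-$p$ solvability plus cokernel killed by $p$'' is simply unavailable.)

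The deeper issue is that mod-$p$ information cannot control the problem: points can agree modulo $p$ to higher order without being proportional modulo $p^2$, and primitive integer points can be proportional modulo $p^2$ with a scalar whose order is divisible by $p$. For a concrete failure of your recipe, take $S=\{\langle 1,3\rangle,\langle 2,15\rangle\}$. Then $\langle 2,15\rangle\equiv 2\langle 1,3\rangle\pmod 9$, so any homogeneous $f$ of degree $d$ satisfies $f(2,15)\equiv 2^d f(1,3)\pmod 9$, and solvability of $f(1,3)=f(2,15)=1$ forces $6\mid d$, the order of $2$ in $(\ZZ/9\ZZ)^\times$; your prescription, which only consults orders in $\FF_3^\times$, would permit $d=2$. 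The correct statement is that the cokernel at $p$ is killed by $p^N$, where $N$ is governed by the $p$-adic valuations of the values $f_v(v)$ of separating forms, and one must then solve the interpolation problem modulo $p^N$ --- i.e., over the Artinian ring $\ZZ/p^N\ZZ$, whose unit group has order $p^{N-1}(p-1)$ --- rather than modulo $p$. This is precisely the paper's route: it builds forms $g_v$ with $g_v(v)=a$ and $g_v(w)=0$ for $w\neq v$ (so the cokernel is killed by $a$ itself, not its radical), proves that $\ZZ/a\ZZ$ has enough homogeneous polynomials by peeling off nilpotents to reduce to finite fields, and raises the resulting form to the power $\phi(a)$ to turn unit values into the value $1$ --- the step that replaces your ``$d$ divisible by orders in $\FF_p^\times$.'' Your outline can be completed along these lines, but the missing modulo-$p^N$ ingredient is the real content of the theorem, not a routine verification.
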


This theorem is not original. It arose in the context of recent results on Noether normalization, such as \cite{bruce-erman,cmbpt,gabber-liu-lorenzini}. However, all of these papers deduce Theorem~\ref{thm:interpolation} from the fact that class groups of rings of integers are torsion, a famous result from algebraic number theory, which is typically proven using analytic methods from the geometry of numbers. We could not help but ask whether this is overkill for a theorem with such a simple statement.  The main goal of this note is to provide a direct  proof of this interpolation result, which avoids facts about class groups.

Our methods lead us to also consider interpolation questions over other rings $R$.  In general, for a commutative ring $R$, we say:

\begin{defn}\label{defn:witness}
The $n$-tuple $\langle x_1,\dots,x_n\rangle\in R^n$ has \emph{coprime entries} if $(x_1,\dots,x_n)=(1)$ as ideals of $R$.
Suppose that for any $n\geq 1$ and finite set $S\subseteq R^n$ of points with coprime entries, there is a (nonconstant) homogeneous polynomial $f\in R[x_1,\dots,x_n]$ such that $f(X)$ is a unit for each $X\in S$. Then we say $R$ \emph{has enough homogeneous polynomials}.
\end{defn}

In fact, our proof of Theorem \ref{thm:interpolation} will apply verbatim to the more general situation:

\begin{thm}\label{thm:interpolation general}
If $R$ is a PID such that all quotients by maximal ideals are finite, then $R$ has enough homogeneous polynomials.
\end{thm}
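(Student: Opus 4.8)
My plan is to reinterpret the unit condition residue field by residue field: since $R$ is a PID, $f(X)\in R^{\times}$ iff the reduction $\bar f$ does not vanish at the image of $X$ in $\PP^{n-1}(R/\m)$ for every maximal ideal $\m$, a genuine projective point because $X$ has coprime entries. I will in fact solve the problem with the value $1$. Note first that two points of $S$ proportional over $\kk=\Frac(R)$ differ by a unit of $R$ (two primitive vectors that are proportional over the fraction field of a PID differ by a unit), so any homogeneous $f$ takes simultaneously unit or non‑unit values on them; I may therefore pass to a set of pairwise non‑proportional representatives, and it suffices to find a homogeneous $f$ equal to $1$ at each representative. Two elementary inputs drive the construction. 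First, over any finite field $\FF_q$ the norm form $N$ of $\FF_{q^n}/\FF_q$, written in a basis, is a homogeneous form of degree $n$ vanishing nowhere on $\FF_q^{n}\setminus\{0\}$; hence $N^{q-1}$ is homogeneous with value $1$ at every point of $\PP^{n-1}(\FF_q)$. Second, over $\kk$ the classical interpolation result quoted at the start of the paper shows that for $D\geq s-1$ the evaluation map on degree‑$D$ forms surjects onto $\kk^{s}$, since the representatives are now pairwise distinct in $\PP^{n-1}(\kk)$.

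Consider then, for $D\geq s-1$, the $R$‑linear evaluation map $\mathrm{ev}_D\colon \Sym^{D}(R^{n})\to R^{s}$, $f\mapsto (f(X))_{X}$. By the previous paragraph $\mathrm{ev}_D\otimes\kk$ is surjective, so $C_D:=\coker(\mathrm{ev}_D)$ is a finite $R$‑module and its annihilator $\mathfrak a:=\ann_R(C_D)$ is a nonzero ideal satisfying $\mathfrak a\cdot R^{s}\subseteq \im(\mathrm{ev}_D)$. The crucial point is that $C_D$ is supported only on a fixed finite set $\mathcal P$ of maximal ideals, independent of $D$: at any $\m$ whose residue field has more than $s$ elements and at which the reductions of the points stay distinct, interpolation over $R/\m$ makes $\mathrm{ev}_D\otimes R/\m$ surjective for $D\geq s-1$, so $\m\notin\Supp(C_D)$. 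The primes with small residue field are finite in number (each prime number $p$ has only finitely many prime divisors in $R$, so only finitely many $\m$ can have residue field of bounded size), and the finitely many primes at which two representatives collide are the only other members of $\mathcal P$.

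Now fix $D$ to be a large common multiple of the integers $n(q_\m-1)$ for $\m\in\mathcal P$, with $D\geq s-1$. For each $\m\in\mathcal P$ I lift a suitable power of $N^{q_\m-1}$ to a homogeneous $f_\m$ of degree $D$ with $f_\m(X)\equiv 1\pmod{\m}$ for all $X$; raising to the order of $(R/\m^{k})^{\times}$ upgrades this to a congruence modulo the relevant power $\m^{k}$, and applying the Chinese Remainder Theorem to the coefficients assembles a single $f_0$ of degree $D$ with $f_0(X)\equiv 1\pmod{\mathfrak a}$ for every $X$. Then $(1,\dots,1)-\mathrm{ev}_D(f_0)\in \mathfrak a\cdot R^{s}\subseteq \im(\mathrm{ev}_D)$, whence $(1,\dots,1)\in\im(\mathrm{ev}_D)$ and the desired nonconstant form exists.

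The step I expect to be the main obstacle is establishing that $\coker(\mathrm{ev}_D)$ is finite and supported on an explicit finite set of primes for a single uniform choice of $D$; this is precisely the content that the cited papers extract from the torsionness of the class group, now replaced by the norm‑form building blocks. Concretely, the delicate points are the passage from rational surjectivity to finiteness of the cokernel, the verification that the bad primes (small residue field, or point collisions) are finite in number in a general PID with finite residue fields, and arranging the one degree $D$ so that the norm‑form powers at each $\m\in\mathcal P$ and the correction term $(1,\dots,1)-\mathrm{ev}_D(f_0)$ all lie in the same graded piece. Once these are in place the norm form furnishes the value‑$1$ data and the argument closes.
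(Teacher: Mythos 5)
The central difficulty in your argument is a circularity that you yourself flag as a ``delicate point'' but never resolve, and it is in fact the crux of the whole theorem. Your modulus $\mathfrak a=\ann_R(C_D)$ depends on $D$, while your construction of $f_0$ with $f_0(X)\equiv 1 \pmod{\mathfrak a}$ pushes the degree upward: upgrading the congruence from mod $\mathfrak m$ to mod $\mathfrak m^{k_{\mathfrak m}}$ requires raising the lifted norm form to a power of order roughly $q_{\mathfrak m}^{k_{\mathfrak m}-1}$, which \emph{multiplies} the degree. So either $f_0$ lands in some degree $D'>D$, where the only containment you know is $\ann_R(C_{D'})\cdot R^s\subseteq\im(\mathrm{ev}_{D'})$ and you have no control over $\ann_R(C_{D'})$, or you must choose $D$ divisible by quantities defined in terms of $\ann_R(C_D)$ itself. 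The images $\im(\mathrm{ev}_D)$ are \emph{not} nested as $D$ grows, not even along multiples (the paper's degree-$60$ example, where solvability holds exactly when $60\mid d$, shows how erratically these images vary), so one cannot pass between degrees for free. What is missing is a uniformity statement: a single nonzero $a\in R$ with $a\cdot R^s\subseteq\im(\mathrm{ev}_D)$ for \emph{all} sufficiently large $D$ simultaneously. This is precisely what the paper manufactures, and it is the one place where coprimality of entries is used beyond nonvanishing mod primes: choose a linear form $L_v$ with $L_v(v)=1$ (possible exactly because $v$ has coprime entries) and a form $f_v$ of degree $d_v$ with $f_v(v)\neq 0$ and $f_v(w)=0$ for $w\neq v$; then with $a=\prod_v f_v(v)$, the forms $\frac{a}{f_v(v)}L_v^{D-d_v}f_v$ show that the vector which is $a$ at $v$ and $0$ elsewhere lies in $\im(\mathrm{ev}_D)$ for \emph{every} $D\geq\max_v d_v$. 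Your framework never produces the $L_v$, and without this additive (rather than only multiplicative) degree flexibility, the final step does not close.

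A secondary problem: your finiteness argument for $\mathcal P$ (``each prime number $p$ has only finitely many prime divisors in $R$'') fails when $R$ has characteristic $p$, since then the prime number $p$ equals $0$ in $R$ and every maximal ideal contains it; PIDs like $\FF_q[t]$ are allowed by the hypotheses. This part is repairable, because the small-residue-field condition is not needed at all: interpolation at $s$ pairwise non-proportional nonzero points is surjective in every degree $\geq s-1$ over \emph{any} field (separate each pair by a linear form), so $\Supp(C_D)$ is contained in the set of primes at which two representatives become proportional, and that set is finite because for each pair the $2\times 2$ minors of the associated matrix have a nonzero gcd in $R$. With that repair, the local half of your argument is sound, and your norm-form construction over $R/\mathfrak m$ is a nice, genuinely elementary substitute for the paper's prime-avoidance and Artinian-ring reduction. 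But the degree-matching circularity above is a real gap, and filling it essentially forces you to rediscover the padding construction that constitutes the heart of the paper's proof.
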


This motivates the general question: Which rings have enough homogeneous polynomials? Since the first version of this paper appeared, there has already been additional work on this subject. Fresnel and Matignon~\cite{FM} have proven that the following conditions are equivalent:
\begin{itemize}
\item $R$ has enough homogeneous polynomials.
\item $R$ has enough homogeneous polynomials in 2 variables.
\item For each $a\in R$, $\text{coker}(R^\times\rightarrow(R/a)^\times)$ is torsion. (\cite{KLW} say that $R$ has `power stable range 1'.)
\item $\mathcal{O}(1)$ is a torsion element of $\text{Pic}\,\text{Proj}\,R[x,y]/(x(ay-bx))$ whenever $\text{gcd}(a,b)=1$.
\end{itemize}

This result confirms that the condition `$R$ has enough homogeneous polynomials' is both testable and intimately related to torsion in Picard groups. Gabber, Liu, and Lorenzini~\cite{gabber-liu-lorenzini} have studied rings of pictorsion (that is, rings for which all finite algebras have torsion Picard groups) in their study of closed subschemes of projective schemes. Fresnel and Matignon~\cite{FM} prove that all pictorsion rings have enough homogeneous polynomials. Hence, our techniques may be applicable to studying torsion in $\text{Pic}(R)$ when $R$ is more general than a ring of integers.

We would like to highlight some subtleties in this subject, by means of a few examples.

\begin{example}\label{ex:deg 60}
By Theorem \ref{thm:interpolation}, there is a homogeneous polynomial $f\in\ZZ[x,y]$ such that \begin{equation*}\begin{array}{ccc}
f(1,4)=1 &f(3,5)=1 &f(4,5)=1.
\end{array}\end{equation*} In fact, there is such an $f$ of degree $d$ if and only if $d$ is divisible by $60$! One example is \[\scriptstyle
(11x^6-43x^5y+14x^4y^2+71x^3y^3-82x^2y^4+32xy^5-4y^6)^{10}-y(4x-y)(5x-3y)(x-y)^{50}(2x^7-x^6y-x^5y^2+2x^4y^3-2xy^6+y^7),
\] and any power of this polynomial works as well. It is not easy to see that no lower degree polynomial will work, but this can be confirmed via direct calculation in a computer algebra package such as Macaulay2~\cite{M2}.\footnote{The computation is the following: for any $d$, let $M_d$ be the $3\times(d+1)$ matrix where the 3 entries in column $j$ are obtained by evaluating the monomial $x^{d-j}y^j$ at $\langle 1,4\rangle, \langle 3,5\rangle$ and $\langle 4,5\rangle$ respectively.  There exists a homogeneous polynomial of degree $d$ with the desired property if and only if the vector $\langle 1,1,1 \rangle$ lies in the image of $M_d$, and one can check this by computing the Smith Normal Form of $M_d$.}
\end{example}

\begin{example}\label{ex:no solution}
It is not true for any $a,b,c\in\ZZ$ that there is homogeneous $f$ satisfying \begin{equation*}\begin{array}{ccc}
f(1,4)=a &f(3,5)=b &f(4,5)=c.
\end{array}\end{equation*} For example, if exactly one of $b,c$ is divisible by $5$, there can be no such polynomial.
\end{example}

\begin{example}
We have said that $R$ has enough homogeneous polynomials if, for any finite set $S\subseteq R^n$ with coprime entries, there is a homogeneous $f$ with $f(X)\in R^\times$ for all $X\in S$.

We can also ask the stronger question: Is there a homogeneous $f$ for which $f(X)=1$ for all $X\in S$? If $R^\times$ is torsion (as it is for $\ZZ)$, then the two questions are equivalent. 
%If $f(X)\in R^\times$ for all $X$, then there will be some $n$ such that $f^n(X)=1$ for all $X$.

On the other hand, if $R^\times$ is not torsion, then $R$ will never satisfy the stronger condition. For instance, if $u\in R^\times$ has infinite order, then there can be no (nonconstant) homogeneous polynomial $f$ which evaluates to $1$ on both $\langle 1,1\rangle $ and $\langle u,u\rangle $.
\end{example}

\begin{example}
An example of a PID which does not have enough homogeneous polynomials is $\QQ[t]$. (The set $S=\{\langle 1,1+t\rangle,\langle 1-t,1\rangle\}$ will fail; see~\cite[Example 8.5]{bruce-erman}.) Another example is $\FF_q[s,t]$ ~\cite[Example 8.6]{bruce-erman}.
\end{example}

\begin{example}\label{ex:converse}
Goldman~\cite{Goldman} constructed a Dedekind domain $A$ such that $\mathbb{Z}[x]\subseteq A\subseteq\mathbb{Q}[x]$, all quotients by maximal ideals are finite, yet the Picard group is not torsion. Fresnel and Matignon~\cite{FM} point out that $A$ has enough homogeneous polynomials but does not have torsion Picard group.
\end{example}

We end with a sketch of the proof idea and some of the colorful history of this problem.

To prove that $\ZZ$ has enough homogeneous polynomials, the idea is to reduce to studying homogeneous polynomials over $\ZZ/a\ZZ$. Then we leverage the fact that $(\ZZ/a\ZZ)^n$ is finite, so that there are enough homogeneous polynomials for essentially combinatorial reasons.

\begin{remark}
An early draft of this paper claimed that Theorem~\ref{thm:interpolation} could be used to prove the class groups of rings of integers were torsion.  While our argument was flawed, it would be interesting to better understand this connection.  More broadly, it would be interesting to see if one can one prove that class groups of rings of integers are torsion without relying on methods from the geometry of numbers.
\end{remark}

The special case $n=2$ of Theorem~\ref{thm:interpolation} was submitted by the first author to the International Math Olympiad and appeared as a problem in 2017.  It was completely solved by just 14 students, making it one of the most challenging problems to appear at the IMO.  The Olympiad solutions yielded two distinct proof strategies for Theorem \ref{thm:interpolation}: one of those aligns with our strategy.  A second strategy, first written down by Dan Carmon, is to induct on the size of the set $S$. See~\cite[pp. 85--87]{imo}.

\section*{Acknowledgments}
We thank Craig Huneke, who first raised the question of finding an elementary proof of Theorem~\ref{thm:interpolation}, and whose ideas and suggestions were very helpful.  We also thank Jordan Ellenberg, Trevor Hyde, Niranjan Ramachandran, Ravi Vakil, Bianca Viray, and Melanie Matchett Wood for useful conversations.  We thank Aurel Page for pointing out a subtle error in an earlier draft.

\section{Main Results}\label{sec:interpolation}
We first observe that fields have enough homogeneous polynomials; this is well-known (see for example~\cite[Theorem~4.2]{eis-syzygy}), but we provide a proof for completeness.

\begin{lemma}\label{lem:field}
Any field has enough homogeneous polynomials.
\end{lemma}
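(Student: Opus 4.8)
The plan is to unwind the definition in the case $R=\kk$ a field. Here an $n$-tuple $\langle x_1,\dots,x_n\rangle$ has coprime entries precisely when it is nonzero, since a nonzero element of a field is a unit and hence generates the unit ideal; likewise $f(X)$ is a unit exactly when $f(X)\neq 0$. So the task reduces to the following: given a finite set $S$ of nonzero points in $\kk^n$, produce a nonconstant homogeneous $f\in\kk[x_1,\dots,x_n]$ with $f(X)\neq 0$ for every $X\in S$.

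First I would record a homogeneity reduction. Since $f(\lambda X)=\lambda^{\deg f}f(X)$ for homogeneous $f$, whether $f(X)=0$ depends only on the line $\kk X$, i.e. on the image $[X]$ in $\PP^{n-1}$. Thus I may replace $S$ by the finite list of distinct projective points $[P_1],\dots,[P_m]$ it determines, and seek a nonconstant homogeneous $f$ that is nonvanishing at each $P_j$.

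The heart is a Lagrange-type construction using linear forms. Since distinct points of $\PP^{n-1}$ are represented by linearly independent vectors, for each pair $j\neq k$ the evaluation map $L\mapsto (L(P_j),L(P_k))$ on linear forms is surjective, so there is a linear form $L_{j,k}$ with $L_{j,k}(P_k)=0$ but $L_{j,k}(P_j)\neq 0$. Setting $g_j=\prod_{k\neq j}L_{j,k}$ yields a homogeneous polynomial of degree $m-1$ with $g_j(P_j)\neq 0$ and $g_j(P_k)=0$ for $k\neq j$; then $f=\sum_{j=1}^m g_j$ satisfies $f(P_j)=g_j(P_j)\neq 0$ for all $j$, as desired.

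It remains to guarantee that $f$ is nonconstant. When $m\geq 2$ the form $f$ has degree $m-1\geq 1$, and the only degenerate case is $m=1$, a single projective point $[P_1]$, where I would simply take $f$ to be any linear form $L$ with $L(P_1)\neq 0$ (such $L$ exists because $P_1\neq 0$). The main subtlety to flag is the finite-field case: over an infinite field one could finish immediately, since finitely many hyperplanes through the origin cannot cover the dual space and a single linear form separates all the points from the origin, but over a finite field no single linear form need work, and it is precisely to sidestep this that the product-and-sum construction above is used.
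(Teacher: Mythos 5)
Your proof is correct, but it follows a different route from the paper's. The paper's argument is abstract and very short: for each $v\in S$ it considers the ideal $I_v$ of homogeneous polynomials vanishing at $v$ (equivalently, the ideal of the line through $v$, hence prime), notes that $\mathfrak m=(x_1,\dots,x_n)$ lies in no single $I_v$, and invokes graded prime avoidance to produce a homogeneous $P\in\mathfrak m\setminus\bigcup_v I_v$. Your argument is instead an explicit Lagrange-style construction: after the (correct) reductions that ``coprime entries'' over a field means nonzero and that nonvanishing of a homogeneous form only depends on the projective point, you build separating linear forms $L_{j,k}$ from the linear independence of representatives of distinct projective points, form the products $g_j=\prod_{k\neq j}L_{j,k}$, and sum them. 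Each approach has its advantages: yours is constructive and gives an explicit degree bound ($m-1$, where $m$ is the number of distinct projective points, matching the bound $s-1$ quoted in the paper's introduction), whereas the paper's is shorter but non-constructive and gives no degree control. Your closing remark about finite fields is well taken and is precisely the reason a single linear form cannot replace either argument; the paper sidesteps the same issue because prime avoidance makes no appeal to the field being infinite. One could quibble that your phrase ``distinct points of $\PP^{n-1}$ are represented by linearly independent vectors'' should be stated pairwise (three or more distinct projective points need not have linearly independent representatives), but since you only use it for pairs $j\neq k$, the argument is sound as written.
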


\begin{proof}
For each point $v\in S$, let $I_v$ denote the ideal generated by homogeneous polynomials vanishing at $v$. Equivalently, $I_v$ is the ideal of polynomials vanishing at a line through $v$, so it is prime. The ideal $\mathfrak{m}=(x_0,\ldots,x_n)$ is not contained in any single $I_v$, so by prime avoidance~\cite[Lemma~3.3]{eisenbud} there is some homogeneous polynomial $P$ in $\mathfrak{m}$ which is not in $\bigcup_v I_v$; that is, $P(v)\neq 0$ for all $v\in S$.
\end{proof}

\begin{lemma}\label{lem:enough} We have the following:
\begin{enumerate}
	\item\label{lem:enough:1}  Suppose $I\subseteq R$ is a nilpotent ideal, and $R/I$ has enough homogeneous polynomials. Then so does $R$.
	\item\label{lem:enough:2}  If $R_1$ and $R_2$ have enough homogeneous polynomials, then so does $R_1\times R_2$.
\end{enumerate}
\end{lemma}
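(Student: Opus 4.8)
The plan is to treat the two parts separately, in each case reducing to the assumed existence of suitable homogeneous polynomials over the quotient (for part~\eqref{lem:enough:1}) or over each factor (for part~\eqref{lem:enough:2}), and then assembling a polynomial over $R$ itself.

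For part~\eqref{lem:enough:1}, I would start with a finite set $S\subseteq R^n$ of points with coprime entries and reduce modulo $I$. The first thing to verify is that coprimality descends: since $(x_1,\dots,x_n)=(1)$ in $R$ forces the image ideal to be $(1)$ in $R/I$, each reduced point $\bar X$ still has coprime entries. Applying the hypothesis to $R/I$ yields a nonconstant homogeneous $\bar f\in(R/I)[x_1,\dots,x_n]$, say of degree $d\geq 1$, with $\bar f(\bar X)$ a unit for all $X\in S$. I would then lift $\bar f$ coefficient by coefficient to a homogeneous $f\in R[x_1,\dots,x_n]$ of the same degree $d$, which is automatically nonconstant. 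The crucial step is the observation that, because $I$ is nilpotent, an element of $R$ is a unit precisely when its image in $R/I$ is a unit: if $rs\equiv 1\pmod I$ then $rs=1+a$ with $a$ nilpotent, so $1+a$ (and hence $r$) is invertible via the finite sum $1-a+a^2-\cdots$. Since $f(X)\equiv\bar f(\bar X)$ modulo $I$ is a unit, it follows that $f(X)$ is a unit in $R$, as required.

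For part~\eqref{lem:enough:2}, write $R=R_1\times R_2$ and decompose each point $X=(x_1,\dots,x_n)\in R^n$ into its two coordinate projections $X^{(1)}\in R_1^n$ and $X^{(2)}\in R_2^n$. Here I would use that ideals of a product ring are products, so that $(x_1,\dots,x_n)=(1)$ in $R$ forces coprimality of both $X^{(1)}$ and $X^{(2)}$ in their respective factors. The hypothesis then supplies nonconstant homogeneous $f_1\in R_1[x_1,\dots,x_n]$ and $f_2\in R_2[x_1,\dots,x_n]$ whose values on the projected point sets are units. The one subtlety I expect to be the main obstacle is that $f_1$ and $f_2$ need not have the same degree, so I cannot immediately pair their coefficients into a single homogeneous polynomial. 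I would fix this by passing to suitable powers: writing $d_i=\deg f_i$ and $d=d_1d_2$, replacing $f_i$ by $f_i^{d/d_i}$ keeps each value a unit (powers of units are units) and makes both polynomials homogeneous of the common degree $d\geq 1$. Then the coefficient-wise pairing $f=(f_1,f_2)\in R[x_1,\dots,x_n]$ is nonzero, homogeneous of degree $d$, and hence nonconstant; and since arithmetic in $R_1\times R_2$ is componentwise, evaluation respects the decomposition, so $f(X)=(f_1(X^{(1)}),f_2(X^{(2)}))$ is a pair of units and therefore a unit in $R$.
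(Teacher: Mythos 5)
Your proposal is correct and follows essentially the same route as the paper's own proof: for part~(1), lift a polynomial from $R/I$ and use that units lift along nilpotent ideals, and for part~(2), project the points to each factor, equalize the degrees of $f_1$ and $f_2$ by taking powers, and pair the results via the splitting $R[x_1,\dots,x_n]\cong R_1[x_1,\dots,x_n]\times R_2[x_1,\dots,x_n]$. The only differences are cosmetic: you spell out details the paper leaves implicit, such as the descent of coprimality, the geometric-series proof that units lift, and the explicit exponent choice $d=d_1d_2$.
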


\begin{proof}
For \eqref{lem:enough:1}: Take $S$ as in Definition \ref{defn:witness}. Because $R/I$ has enough homogeneous polynomials, there is some homogeneous $f\in R[x_1,\dots,x_n]$ such that $f(v)$ is a unit mod $I$ for each $v$. However, $x\in R$ is a unit if and only if its residue mod $I$ is a unit (because $I$ is nilpotent). Therefore, $f(v)$ is a unit in $R$ for each $v$, so $R$ has enough homogeneous polynomials.

For \eqref{lem:enough:2}: Let $R=R_1\times R_2$. For $S\subseteq R^n$ as in Definition \ref{defn:witness}, let $S_1,S_2$ be the projections onto $R_1^n,R_2^n$. Since $R_1,R_2$ have enough homogeneous polynomials, there are homogeneous $f_1\in R_1[x_1,\dots,x_n]$ and $f_2\in R_2[x_1,\dots,x_n]$ such that $f_1(v)$ is a unit for all $v\in S_1$ and $f_2(v)$ is a unit for all $v\in S_2$. 

Choose positive integer exponents $e_1,e_2$ such that $f_1^{e_1}$ and $f_2^{e_2}$ have the same degree $d$. 
The splitting $R=R_1\times R_2$ induces a corresponding splitting of $R[x_1,\dots,x_n]$, and thus the pair $(f_1^{e_1},f_2^{e_2})$ naturally defines a homogeneous  element of $R[x_1,\dots,x_n]$ where
$f(v)=(f_1(v),f_2(v))$ for all $v\in S$.  Therefore $R$ has enough homogeneous polynomials.
\end{proof}

\begin{cor}\label{lem:artinian}
Any Artinian ring has enough homogeneous polynomials.  In particular, $\ZZ/a\ZZ$ has enough homogeneous polynomials for any $a\geq 1$.
\end{cor}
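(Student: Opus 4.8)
The plan is to reduce the general Artinian case to the two building blocks already in hand: fields (Lemma~\ref{lem:field}) and the two closure properties of Lemma~\ref{lem:enough}. The only external input I would need is the structure theory of commutative Artinian rings.

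First I would invoke the structure theorem: every commutative Artinian ring $R$ decomposes as a finite product $R\isom R_1\times\cdots\times R_k$ of local Artinian rings $(R_i,\m_i)$. By iterating Lemma~\ref{lem:enough}\eqref{lem:enough:2} (a trivial induction on $k$), it suffices to prove that each local factor $R_i$ has enough homogeneous polynomials. So fix a local Artinian ring $(R_i,\m_i)$. Here the maximal ideal $\m_i$ is nilpotent, and the quotient $R_i/\m_i$ is a field; by Lemma~\ref{lem:field} this residue field has enough homogeneous polynomials. Since $\m_i$ is a nilpotent ideal, Lemma~\ref{lem:enough}\eqref{lem:enough:1} then lifts this property from $R_i/\m_i$ to $R_i$. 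Assembling the factors gives the result for $R$.

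For the special case, I would simply note that $\ZZ/a\ZZ$ is a finite ring for every $a\geq 1$, and any finite ring is Artinian, so the statement just proven applies. No step here is a genuine obstacle: the substantive work was already carried out in Lemmas~\ref{lem:field} and~\ref{lem:enough}, and the only nontrivial ingredient is the decomposition of an Artinian ring into local factors with nilpotent maximal ideals, which I would cite from a standard reference (e.g.\ Atiyah--Macdonald). The corollary is thus essentially an assembly of the preceding results.
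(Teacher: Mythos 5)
Your proposal is correct and is essentially identical to the paper's own proof: both decompose the Artinian ring into a finite product of local Artinian rings, reduce to a single local factor via Lemma~\ref{lem:enough}\eqref{lem:enough:2}, and then combine the nilpotence of the maximal ideal (Lemma~\ref{lem:enough}\eqref{lem:enough:1}) with Lemma~\ref{lem:field} for the residue field. The only cosmetic difference is that the paper cites Eisenbud rather than Atiyah--Macdonald for the structure theorem.
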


\begin{proof}
Any Artinian ring is a finite product of local Artinian rings~\cite[Corollary~2.16]{eisenbud}, and thus by Lemma~\ref{lem:enough}\eqref{lem:enough:2} we can reduce to considering a local Artinian ring.  However, in any local Artinian ring the maximal ideal is nilpotent, and thus it follows from Lemma~\ref{lem:enough}\eqref{lem:enough:1} and Lemma~\ref{lem:field}.
\end{proof}

In order to prove Theorem \ref{thm:interpolation}, we only need to know $\ZZ/a\ZZ$ has enough homogeneous polynomials for each $n$. This is easier than the above proofs may suggest. For example,

\begin{example}
Consider $S=\{\langle 2,3\rangle,\langle5,7 \rangle,\langle 11,13\rangle \}$ in $\ZZ/100\ZZ$. We know that $\ZZ/100\ZZ$ is a product of nilpotent extensions of $\ZZ/5\ZZ$ and $\ZZ/2\ZZ$. By Lemma \ref{lem:enough}, it suffices to find homogeneous polynomials over $\ZZ/5\ZZ$ and $\ZZ/2\ZZ$ which evaluate to units on $S$.

We may pick $x^4+y^4$ over $\ZZ/5\ZZ$ and $x^2+xy+y^2$ over $\ZZ/2\ZZ$. As in Lemma~\ref{lem:enough}\eqref{lem:enough:2}, the polynomial $(x^4+y^4,(x^2+xy+y^2)^2)=(1,1)x^4+(0,2)x^3y+(0,3)x^2y^2+(0,2)xy^3+(1,1)y^4$ over $\ZZ/25\ZZ\oplus\ZZ/4\ZZ$ works. The Chinese Remainder Theorem exhibits $\ZZ/25\ZZ\oplus\ZZ/4\ZZ\cong\ZZ/100\ZZ$ via the identification $(a,b)\mapsto 76a+25b$, so that the polynomial $x^4+50x^3y+75x^2y^2+50xy^3+y^4$ evaluates to units on $S$. (In fact, it evaluates to a unit on \emph{any} vector with coprime entries, so the choice of $S$ is irrelevant.)
\end{example}

\begin{proof}[Proof of Theorem~\ref{thm:interpolation}]
We prove that $\ZZ$ has enough homogeneous polynomials.
Let $S\subseteq\ZZ^n$ be a finite set of vectors, each one with coprime entries. If $f$ is a homogeneous polynomial, then $f(v)=\pm f(-v)$. Therefore, we can assume without loss of generality that $S$ does not contain any two vectors that are scalar multiples of each other.

For each $v\in S$, there is some $f_v\in\ZZ[x_1\dots,x_n]$, homogeneous of degree $d_v$, such that $f_v(v)\neq 0$ and $f_v(w)=0$ for $w\neq v$ in $S$. Choose once and for all such a $f_v$ for each $v\in S$.

Let $a=\prod_v f_v(v)$. Since $\ZZ/a\ZZ$ has enough homogeneous polynomials (Corollary \ref{lem:artinian}), there is some homogeneous $h\in\ZZ[x_1,\dots,x_n]$ such that
\[
h(v)\equiv 1\text{ }(\text{mod }a)
\] for each $v\in S$. (Take a polynomial such that $h(v)$ is a unit mod $a$ for each $v$, then raise it to the power $\phi(a)$.) Choose an exponent $h^k$ such that $\text{deg}(h^k)=d\geq d_v$ for all $v\in S$.

Since each $v$ has coprime entries, we may pick linear homogeneous polynomials $L_v$ for which $L_v(v)=1$ (one for each $v\in S$). Let \[
g_v=\frac{a}{f_v(v)}L_v^{d-d_v}f_v,
\] which is a homogeneous polynomial of degree $d$ with integer coefficients. Moreover, $g_v(v)=a$, while $g_v(w)=0$ for any $w\neq v$.

To complete the proof, the linear combination \[
h-\sum_{v\in S}\frac{h(v)-1}{a}g_v
\] is homogeneous of degree $d$ and evaluates to $h(v)-\frac{h(v)-1}{a}a=1$ at each $v\in V$. Therefore, $\ZZ$ has enough homogeneous polynomials.
\end{proof}

\begin{cor}\label{cor:enough}
Suppose that $R$ is an integral domain for which every quotient by a nonzero principal ideal has enough homogeneous polynomials, and every group $\text{coker}(R^\times\rightarrow(R/I)^\times)$ is torsion. Then $R$ has enough homogeneous polynomials.
\end{cor}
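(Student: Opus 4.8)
The plan is to run the proof of Theorem~\ref{thm:interpolation} almost verbatim, substituting the two hypotheses for the two places where that argument used special features of $\ZZ$. First I would make the same initial reduction: given a finite $S\subseteq R^n$ with coprime entries, discard redundant points so that no two vectors of $S$ are scalar multiples of one another. This loses no generality, because if $w=uv$ for a unit $u\in R^\times$ and $f$ is homogeneous of degree $d$, then $f(w)=u^d f(v)$, so $f(v)$ is a unit if and only if $f(w)$ is; and among vectors with coprime entries, any two that are $\Frac(R)$-scalar multiples must differ by a unit of $R$ (coprimeness forces the scalar to be a unit). Next, for each $v\in S$ I would build a separating polynomial $f_v$, homogeneous of some degree $d_v$, with $f_v(v)\neq 0$ and $f_v(w)=0$ for all $w\neq v$ in $S$. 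Since distinct non-proportional nonzero vectors are linearly independent over $K=\Frac(R)$, for each $w\neq v$ there is a $K$-linear form vanishing at $w$ but not at $v$; clearing denominators gives an $R$-coefficient linear form $\ell_{v,w}$ with $\ell_{v,w}(w)=0$ and $\ell_{v,w}(v)\neq 0$, and $f_v=\prod_{w\neq v}\ell_{v,w}$ does the job because $R$ is a domain.

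Now set $a=\prod_{v\in S}f_v(v)$, which is a nonzero element of $R$ since $R$ is a domain. As $(a)$ is a nonzero principal ideal, the hypothesis provides that $R/(a)$ has enough homogeneous polynomials, so there is a nonconstant homogeneous $h\in R[x_1,\dots,x_n]$ with $h(v)$ a unit modulo $a$ for every $v\in S$. Here is the one genuinely new step, replacing the appeal to Euler's theorem (the $\phi(a)$-th power) in the proof of Theorem~\ref{thm:interpolation}. The residue $\overline{h(v)}$ lies in $(R/a)^\times$; by the assumption that $\coker(R^\times\to(R/a)^\times)$ is torsion, some power of $\overline{h(v)}$ lies in the image of $R^\times$. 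Taking $m$ to be a common multiple of these exponents over all $v\in S$, and large enough that $m\deg h\geq \max_v d_v$ (possible since $\deg h\geq 1$), I obtain for each $v$ a genuine unit $u_v\in R^\times$ with $h(v)^m\equiv u_v\pmod a$. Write $H=h^m$, homogeneous of degree $d:=m\deg h$.

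To finish, I would correct $H$ by the same interpolation trick used in the theorem. Since each $v$ has coprime entries, choose a linear form $L_v$ with $L_v(v)=1$, and set $g_v=\frac{a}{f_v(v)}L_v^{\,d-d_v}f_v$, a homogeneous polynomial of degree $d$ with $g_v(v)=a$ and $g_v(w)=0$ for $w\neq v$. Because $H(v)\equiv u_v\pmod a$, each coefficient $\frac{H(v)-u_v}{a}$ lies in $R$, so
\[ F \;=\; H-\sum_{v\in S}\frac{H(v)-u_v}{a}\,g_v \]
is homogeneous of degree $d$ and satisfies $F(v)=H(v)-\frac{H(v)-u_v}{a}\cdot a=u_v\in R^\times$ for each $v\in S$. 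Being homogeneous of positive degree and nonzero, $F$ is nonconstant, so $R$ has enough homogeneous polynomials.

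The main obstacle—really the only point that differs from the proof of Theorem~\ref{thm:interpolation}—is the passage from \emph{unit modulo $a$} to \emph{unit in $R$}. Over $\ZZ$ one simply raises to the $\phi(a)$-th power to reach the value $1$; here the torsion hypothesis is precisely what guarantees that a uniform power of $h(v)$ becomes congruent to an actual global unit $u_v$, after which the correction by the $g_v$ pins the value down to that unit. One should note that the $u_v$ may genuinely depend on $v$, which is harmless since the definition of having enough homogeneous polynomials only requires each $F(v)$ to be a unit, not that the values all equal $1$.
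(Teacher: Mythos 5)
Your proof is correct and is essentially the paper's own argument: the paper simply says the proof ``exactly follows that of Theorem~\ref{thm:interpolation},'' using the domain hypothesis to ensure $a\neq 0$ and the torsion cokernel hypothesis to produce units $u_v\in R^\times$ with $h(v)\equiv u_v \pmod a$, which is precisely what you do. Your write-up additionally fills in details the paper leaves implicit (the unit-scalar reduction, the construction of the separating polynomials $f_v$, and the choice of a common exponent $m$), all of which are handled correctly.
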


\begin{proof}
The proof exactly follows that of Theorem~\ref{thm:interpolation}. We must have $R$ a domain to ensure $a\neq 0$, and torsion cokernels to ensure that we can choose $h$ which satisfies: for each $v\in S$, there is a unit $u_v\in R^\times$ such that $h(v)\equiv u_v\text{ }(\text{mod }a)$.
\end{proof}

We obtain Theorem~\ref{thm:interpolation general} immediately:

\begin{proof}[Proof of Theorem~\ref{thm:interpolation general}]
We are to prove that $R$ has enough homogeneous polynomials. If $R$ is a PID where all quotients by maximal ideals are finite fields, then for any nonzero ideal $I\subseteq R$, $R/I$ will be a finite ring.   In particular $(R/I)^\times$ is a finite set, and thus $R$ has enough homogeneous polynomials by Corollary~\ref{cor:enough}.
\end{proof}

\begin{remark}
The condition that $\text{coker}(R^\times\rightarrow(R/I)^\times)$ is torsion for all principal ideals $I$ can be restated: Any $x,y\in R$ with $\text{gcd}(x,y)=1$ and $y\neq 0$ satisfy $x^n+ay\in R^\times$ for some $n\geq 1$ and $a\in R$. In this case, Khurana, Lam, and Wang \cite{KLW} say that $R$ has \emph{power stable range one}.

In their follow-up to this paper, Fresnel and Matignon \cite{FM} prove that $R$ has enough homogeneous polynomials if and only if it has power stable range one, extending Corollary \ref{cor:enough}. We remark that, if $R$ is also a finite dimensional integral domain, this result follows from Corollary \ref{cor:enough} by induction on dimension.
\end{remark}

\begin{bibdiv}
\begin{biblist}

\bib{bruce-erman}{article}{
   author={Bruce, Juliette},
   author={Erman, Daniel},
   title={A probabilistic approach to systems of parameters and Noether
   normalization},
   journal={Algebra Number Theory},
   volume={13},
   date={2019},
   number={9},
   pages={2081--2102},
%   issn={1937-0652},
%   review={\MR{4039497}},
%   doi={10.2140/ant.2019.13.2081},
}

\bib{cmbpt}{article}{
   author={Chinburg, Ted},
   author={Moret-Bailly, Laurent},
   author={Pappas, Georgios},
   author={Taylor, Martin J.},
   title={Finite morphisms to projective space and capacity theory},
   journal={J. Reine Angew. Math.},
   volume={727},
   date={2017},
   pages={69--84},
   issn={0075-4102},
   review={\MR{3652247}},
   doi={10.1515/crelle-2014-0089},
}

\bib{eisenbud}{book}{
   author={Eisenbud, David},
   title={Commutative algebra},
   series={Graduate Texts in Mathematics},
   volume={150},
   note={With a view toward algebraic geometry},
   publisher={Springer-Verlag, New York},
   date={1995},
   pages={xvi+785},
%   isbn={0-387-94268-8},
%   isbn={0-387-94269-6},
%   review={\MR{1322960}},
%   doi={10.1007/978-1-4612-5350-1},
}

\bib{eis-syzygy}{book}{
author={Eisenbud, David},
   title={The geometry of syzygies},
   series={Graduate Texts in Mathematics},
   volume={229},
   note={A second course in commutative algebra and algebraic geometry},
   publisher={Springer-Verlag},
   place={New York},
   date={2005},
   pages={xvi+243},
   isbn={0-387-22215-4},
   %%review={\MR{2103875 (2005h:13021)}},
}

\bib{FM}{article}{
   author={Fresnel, Jean},
   author={Matignon, Michel},
   title={Good rings and homogeneous polynomials},
   date={2019},
   note={arXiv:1907.05655},
}

\bib{gabber-liu-lorenzini}{article}{
   author={Gabber, Ofer},
   author={Liu, Qing},
   author={Lorenzini, Dino},
   title={Hypersurfaces in projective schemes and a moving lemma},
   journal={Duke Math. J.},
   volume={164},
   date={2015},
   number={7},
   pages={1187--1270},
%   issn={0012-7094},
%   review={\MR{3347315}},
%   doi={10.1215/00127094-2877293},
}

\bib{Gauss}{book}{
author={Gauss, Carl Friedrich (tr. Arthur A. Clarke)},
   title={Disquisitiones Arithmeticae},
   publisher={Yale University Press},
   date={1965},
}

\bib{Goldman}{article}{
   author={Goldman, Oscar},
   title={On special class of Dedekind domain},
   journal={Topology},
   volume={3},
   date={1964},
   number={1},
   pages={113-118},
}

\bib{imo}{misc}{
author = {IMO},
title= {Shortlisted Problems, IMO 2017},
url = {https://www.imo-official.org/problems/IMO2017SL.pdf},
date = {2017},
}

\bib{KLW}{article}{
   author={Khurana, Dinesh},
   author={Lam, T.Y.},
   author={Wang, Zhou},
   title={Rings of square stable range one},
   journal={Journal of Alg.},
   volume={338},
   date={2011},
   pages={122--141},
}

\bib{M2}{misc}{
  label={M2},
  author={Grayson, Daniel~R.},
  author={Stillman, Michael~E.},
  title={Macaulay2, a software system for research
    in algebraic geometry},
  publisher = {available at \url{http://www.math.uiuc.edu/Macaulay2/}},
}
\end{biblist}
\end{bibdiv}

\end{document}